\newtheorem{theorem}{Theorem}[section]
\newtheorem{definition}[theorem]{Definition}
\newtheorem{proposition}[theorem]{Proposition}
\newtheorem{example}[theorem]{Example}
\newtheorem{remark}[theorem]{Remark}
\numberwithin{equation}{section}
\newcounter{newlist}
\newcounter{nnnewlist}
\newcounter{nelist}
\def\vp{\varphi}
\def\ve{\varepsilon}
\def\br{\mathbb{R}}
\def\cp{\mathcal{P}}
\def\cf{\mathcal{F}}
\def\cb{\mathcal{B}}
\def\cf{\mathcal{F}}
\def\lu{\underline{\mu}}
\def\ou{\overline{\mu}}
\def\be{\mathbb{E}}
\def\pe{\mathbb{E}^{\cp}}
\def\bn{\mathbb{N}}
\def\vp{\varphi}
\def\ve{\varepsilon}
\def\lt{\left}
\def\rt{\right}
\begin{document}

\title{On the necessary and sufficient conditions for Peng's law of large numbers  under sublinear expectations}

\author{Xinpeng LI\ \ \ Gaofeng ZONG\thanks{Partially supported by NSF of Shandong Province (No. ZR2021MA018), National Key R\&D Program of China (No. 2018YFA0703900), NSF of China (No.11601281, No.11501325), the Young Scholars Program of Shandong University, the China Postdoctoral Science Foundation (Grant No.2018T110706, No.2018M642641)).
\
Corresponding author:  Gaofeng Zong, Email: gf\b{ }zong@126.com}\\Reserach Center for Mathematics and Interdisciplinary Sciences\\Shandong University\\266237, Qingdao, China;\\
School of Mathematics and Quantitative Economics,\\ Shandong University of Finance and Economics,\\ 250014, Jinan, China
\\ $^\ast$Email: gf\b{ }zong@126.com}
\date{}

\maketitle

\noindent\textbf{Abstract.}
In this paper, we firstly establish the weak laws of large numbers on the canonical space $(\br^\bn,\cb(\br^\bn))$ by traditional truncation method and Chebyshev's inequality as in the classical probability theory. Then we extend them from the canonical space to the general sublinear expectation space. The necessary and sufficient conditions for Peng's law of large numbers are obtained.  \\
\textbf{Keywords:} Canonical space; Independence and identical distribution; Peng's law of large numbers; Sublinear expectation \\
\textbf{MSC2010: }{Primary 60B05; Secondary 60F05, 60A86}



\section{Introduction}

The general law of large numbers (LLN) on sublinear expectation space $(\Omega,\mathcal{H},\be)$  was established by Peng \cite{pengsur,Peng2007b,Peng2019}, known as Peng's law of large numbers: Let $\{X_i\}_{i\in\bn}$ be independent and identically distributed (i.i.d.) random variables with
\begin{equation}\label{uc}
\lim_{\lambda\to\infty}\be[(|X_1|-\lambda)^+]=0.
\end{equation}
then
\begin{equation}
\label{wlln}
\lim_{n\to\infty}\be\lt[\vp\lt(\frac{1}{n}\sum_{i=1}^nX_i\rt)\rt]=\max_{-\be[-X_1]\leq\mu\leq\be[X_1]}\vp(\mu), \ \ \ \forall \vp\in C_b(\br).
\end{equation}
Here the notions of i.i.d. are initially introduced by Peng \cite{Peng2007b}, which are directly defined by the sublinear expectation $\be$ instead of the capacity (for example, see Marinacci \cite{massimo} and  Maccheroni and Marinacci \cite{mm}). The convergence in the form (\ref{wlln}) is similar to the weak convergence in the classical probability theory, as such (\ref{wlln}) can be regarded as the weak LLN. The limit distribution in the right hand of (\ref{wlln}) is called maximal distribution, see Peng \cite{Peng2019}. There are also strong LLNs on the sublinear expectation space, see Chen \cite{chen1}, Chen et al. \cite{cwl}, Zhang \cite{zhang,zhang2021}, Guo and Li \cite{GL}, etc.

This paper focuses on the weak LLN on the sublinear expectation space, which can not be implied directly by the strong one. Recently, the convergence rate of Peng's LLN was studied under higher moment conditions, see Fang et al. \cite{FPSS}, Song \cite{song} and Hu et al. \cite{hll}. But is  (\ref{uc}) a sharpest condition to obtain (\ref{wlln})? We recall that the classical weak LLN shows that, for i.i.d. sequence $\{X_i\}$ on the probability space $(\Omega,\cf,P)$, $\frac{\sum_{i=1}^nX_i}{n}$ converges to $\mu$ in probability if and only if
\begin{align}\label{c1}
\lim_{n\rightarrow\infty}nP(|X_1|\geq n)=0\ \ \text{and}  \ \ \lim_{n\rightarrow\infty}E_P[(-n\vee X_1)\wedge n]=\mu,
\end{align}
which inspires us to explore the necessary and sufficient conditions for Peng's LLN analogous to (\ref{c1}).

Different from most literature on the limit theorems under sublinear expectation, we adopt the formulation similar to Li and Peng \cite{li2021}. We firstly establish LLN on the canonical space $(\br^\bn,\cb(\br^\bn))$ by traditional truncation method and Chebyshev's inequality. Then we extend the LLN to the general sublinear expectation space introduce by Peng \cite{Peng2019}. The necessary and sufficient conditions for Peng's LLN are obtained,  which is a natural generalization of the classical LLN to the sublinear expectation theory. But unlike the classical situation, there is a counterexample shows that the weak LLN (\ref{wlln}) does not hold if $\be[|X_1|]<\infty$.

The rest of this paper is organized as follows. Section 2 describes the basic model on the canonical space.  We establish the LLNs on the canonical space in Section 3 and then extend them to the general sublinear expectation space in Section 4. The proofs are given in Section 5.

\section{Basic model on canonical space}

Following Li and Peng \cite{li2021}, let $(\br^\bn,\cb(\br^\bn))$ be the canonical space and $\{X_i\}_{i\in\bn}$ be the sequence of canonical random variables defined by $X_i(\omega)=\omega_i$ for $\omega=(\omega_1,\cdots,\omega_n,\cdots)\in\br^\bn$. For each $i\in\bn$, $\cp_i$ is the convex and weakly compact set of probability measures on $(\br,\mathcal{B}(\br))$ characterizing the uncertainty of the distributions of $X_i$ as described in Peng \cite{Peng2019}. The set of joint laws on $(\br^\bn,\mathcal{B}(\br^\bn))$ via the probability kernels are defined by
\begin{equation}
\begin{array}
[c]{l}
\mathcal{P}=\{P:P(A^{(n)}\times\br^{\bn-n})=\int_\br\mu_1(dx_1)\int_{\br}\kappa_{2}(x_1,dx_2)\cdots\int_\br I_{A^{(n)}}\kappa_n(x_1,\cdots,x_{n-1},dx_{n})\\
\forall n\geq 1, \ \forall A^{(n)}\in\cb(\br^n),\ \ \kappa_i(x_1,\cdots,x_{i-1},\cdot)\in\cp_i,\ \ 2\leq i\leq n,\ \mu_1\in\cp_1\},
\end{array}
\label{e1}%
\end{equation}
where for each $i\geq 2$, $\kappa_i(x_1,\cdots,x_{i-1}, dx_i)$ is the probability kernel satisfying:
\begin{itemize}
\item[(i)] $\forall (x_1,\cdots,x_{i-1})\in\br^{i-1}$, $\kappa_i(x_1,\cdots,x_{i-1},\cdot)$ is a probability measure on $(\br,\mathcal{B}(\br))$.
\item[(ii)] $\forall B\in\mathcal{B}(\br)$, $\kappa_i(\cdot, B)$ is $\mathcal{B}(\br^{i-1})$-measurable.
\end{itemize}
The existence of $P$ on $(\br^\bn,\cb(\br^\bn))$ is shown by Ionescu-Tulcea theorem.

Since $\cp_i$ is independent of $(x_1,\cdots,x_{i-1})$, we call that $X_i$ is independent of $(X_1,\cdots,X_{i-1})$. Here independence means the uncertainty of the distributions of $X_i$ is independent of random vector $(X_1,\cdots, X_{i-1})$. It is obvious that such independence is not symmetric in general. If we further assume that the uncertainties of each $X_i$ are the same, i.e., $\cp_i=\cp_1$, then $\{X_i\}_{i\in\bn}$ is an i.i.d. sequence. In this case, $\cp$ is determined by the set $\cp_1$ of the distributions of $X_1$.

For each $\mathcal{B}(\br^\bn)$-measurable random variable $X$, the sublinear expectation $\pe$ is defined by
$$\mathbb{E}^{\cp}[X]:=\sup_{P\in\cp}E_P[X].$$
The corresponding  capacities $v$ and $V$ are defined respectively
$$v(A)=\inf_{P\in\cp}P(A), \ \ \ V(A)=\sup_{P\in\cp}P(A), \ \ \forall A\in\cb(\br^\bn).$$
The canonical filtration $\{\cf_i\}_{i\in\bn}$ is defined as $\cf_i=\sigma(X_k, 1\leq k\leq i)$ with convention $\cf_0=\{\emptyset,\br^\bn\}$.

By the regularity of conditional probability, we have
$$E_P[\vp(X_i)|\mathcal{F}_{i-1}](\omega)=E_{\kappa_i(x_1,\cdots,x_{i-1},\cdot)}[\vp(X_i)],\ \ \ P-\text{a.s.},$$
where $\omega=(x_1,\cdots,x_n,\cdots)\in\br^\bn$ and $\kappa_i$ is the probability kernel introduced by $P$.

We immediately have the following proposition since  $\kappa_i(x_1,\cdots,x_{i-1},\cdot)\in\cp_i$, $1\leq i\leq n$.
\begin{proposition}\label{prop23}
For each $P\in\cp$ and $\vp\in C_{b}(\br)$, we have
\begin{equation}-\mathbb{E}^{\cp}[-\vp(X_i)]\leq E_P[\vp(X_i)|\cf_{i-1}]\leq\mathbb{E}^{\cp}[\vp(X_i)], \ \ P-\text{a.s.} \ i\in\bn.\label{eq22}\end{equation}
\end{proposition}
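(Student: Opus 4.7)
The plan is to reduce the two-sided estimate to the pointwise observation that the conditional expectation $E_P[\vp(X_i)\mid\cf_{i-1}](\omega)$ is the integral of $\vp$ against a measure that lives in $\cp_i$, and then to identify the unconditional sublinear expectation $\pe[\vp(X_i)]$ with the supremum of such integrals over $\cp_i$. Once these two pieces are in hand, both inequalities will be immediate.

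First I would start from the regularity-of-conditional-probability formula recorded just before the proposition,
\[
E_P[\vp(X_i)\mid\cf_{i-1}](\omega)=\int_{\br}\vp(y)\,\kappa_i(x_1,\ldots,x_{i-1},dy),\qquad P\text{-a.s.},
\]
and use that by construction of $\cp$ the measure $\kappa_i(x_1,\ldots,x_{i-1},\cdot)$ lies in $\cp_i$ for every choice of $(x_1,\ldots,x_{i-1})$. Hence the conditional expectation is, pointwise $P$-a.s., sandwiched between $\inf_{\mu\in\cp_i}\int\vp\,d\mu$ and $\sup_{\mu\in\cp_i}\int\vp\,d\mu$.

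Second, I would establish the identification
\[
\pe[\vp(X_i)]=\sup_{\mu\in\cp_i}\int_{\br}\vp(y)\,\mu(dy),
\]
and correspondingly $-\pe[-\vp(X_i)]=\inf_{\mu\in\cp_i}\int\vp\,d\mu$. The ``$\leq$'' direction follows by unrolling the kernel definition of any $P\in\cp$ via Fubini/Tonelli and bounding the innermost integral by the supremum over $\cp_i$. For the reverse ``$\geq$'' direction, given any $\nu\in\cp_i$ one can exhibit a specific $P\in\cp$ by choosing constant kernels $\kappa_j(\cdot,\cdot)\equiv\nu_j\in\cp_j$ for $j\neq i$ and $\kappa_i(\cdot,\cdot)\equiv\nu$; for this $P$ the marginal law of $X_i$ is exactly $\nu$, and taking the supremum over $\nu$ gives the claim.

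Combining the two steps yields the stated inequalities $P$-a.s. The main obstacle is really just bookkeeping in the second step: making sure the ``constant kernel'' construction produces an element of the set $\cp$ defined in \eqref{e1}, which amounts to checking measurability conditions (i)--(ii) for constant kernels (trivial) and applying Ionescu--Tulcea to guarantee existence of the corresponding $P$. Everything else is essentially a rewriting of the regularity identity.
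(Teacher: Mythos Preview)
Your proposal is correct and follows exactly the route the paper takes: the paper states the proposition as an immediate consequence of the regularity identity $E_P[\vp(X_i)\mid\cf_{i-1}](\omega)=E_{\kappa_i(x_1,\ldots,x_{i-1},\cdot)}[\vp(X_i)]$ together with the fact that $\kappa_i(x_1,\ldots,x_{i-1},\cdot)\in\cp_i$. The only difference is that you spell out the identification $\pe[\vp(X_i)]=\sup_{\mu\in\cp_i}\int\vp\,d\mu$ via the constant-kernel construction, which the paper treats as implicit in the definition of $\cp$.
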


\begin{remark}
The inequality (\ref{eq22}) also holds on the general sublinear expectation space, see Guo and Li \cite{GL}.
\end{remark}

In the end of this section, we generalize Ottaviani's inequality to the canonical space.
\begin{proposition}\label{otta}
Let $c$ be a real number satisfying $0<c<1$  and $S_n=\sum_{i=1}^nX_i$. If there exist constants $\alpha_n\in\br$ such that
$$\max_{1\leq k\leq n} V(|S_n-S_k|\geq\alpha_n)\leq c,$$
then we have
$$V\lt(\max_{1\leq k\leq n}|S_k|\geq 2\alpha_n\rt)\leq \frac{1}{1-c}V(|S_n|\geq \alpha_n).$$
\end{proposition}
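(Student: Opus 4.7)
The plan is to run the classical Ottaviani argument pathwise under each $P\in\cp$ and then take a supremum. Fixing $P\in\cp$, I would introduce the disjoint events $T_k:=\{|S_k|\geq 2\alpha_n,\ |S_j|<2\alpha_n\text{ for } j<k\}\in\cf_k$, $1\leq k\leq n$, whose union is $\{\max_{1\leq k\leq n}|S_k|\geq 2\alpha_n\}$. Since on $T_k\cap\{|S_n-S_k|<\alpha_n\}$ the triangle inequality forces $|S_n|\geq |S_k|-|S_n-S_k|>\alpha_n$, one obtains
$$P(|S_n|\geq\alpha_n)\geq\sum_{k=1}^nE_P\lt[I_{T_k}\,P(|S_n-S_k|<\alpha_n\,|\,\cf_k)\rt].$$
If I can show $P(|S_n-S_k|<\alpha_n\,|\,\cf_k)\geq 1-c$ a.s., the right-hand side is at least $(1-c)P(\max_k|S_k|\geq 2\alpha_n)$, and dividing by $1-c$ and taking $\sup_{P\in\cp}$ would yield the claim.

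The heart of the argument is therefore the conditional upper bound
$$P\lt(|S_n-S_k|\geq\alpha_n\,|\,\cf_k\rt)(\omega)\leq V(|S_n-S_k|\geq\alpha_n)\leq c,\quad P\text{-a.s.}$$
To establish this for a fixed $\omega=(x_1,x_2,\ldots)\in\br^\bn$, I would use the kernel representation of $P$: the conditional probability equals the integral of $I_{\{|y_{k+1}+\cdots+y_n|\geq\alpha_n\}}$ against the product $\kappa_{k+1}(x_1,\ldots,x_k,\cdot)\otimes\cdots\otimes\kappa_n(x_1,\ldots,x_{n-1},\cdot)$. I would then construct $P'\in\cp$ by freezing these initial arguments: for $j>k$, set $\kappa_j'(y_1,\ldots,y_{j-1},\cdot):=\kappa_j(x_1,\ldots,x_k,y_{k+1},\ldots,y_{j-1},\cdot)$, and keep any fixed $\mu_1'\in\cp_1$ and any fixed kernels $\kappa_j'\in\cp_j$ for $2\leq j\leq k$. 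Conditions (i)--(ii) in \eqref{e1} guarantee $\kappa_j'(y_1,\ldots,y_{j-1},\cdot)\in\cp_j$ for every $j$, so $P'\in\cp$. Because the event $\{|S_n-S_k|\geq\alpha_n\}$ depends only on coordinates $k+1,\ldots,n$, $E_{P'}$ of its indicator coincides with the conditional expectation under $P$, and is therefore dominated by $\pe[I_{\{|S_n-S_k|\geq\alpha_n\}}]=V(|S_n-S_k|\geq\alpha_n)\leq c$.

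The main obstacle I anticipate is exactly this step: Proposition \ref{prop23} as stated handles only $C_b(\br)$-functions of a single coordinate $X_i$, whereas here one needs a multivariate, discontinuous indicator of $S_n-S_k$. Circumventing this by directly freezing the initial coordinates inside the kernels, rather than through a monotone-class or Portmanteau approximation by $C_b$ functions combined with an iterated application of Proposition \ref{prop23}, keeps the argument clean and avoids any measurability subtleties. Once the conditional bound is in place, the remainder is an almost verbatim translation of the classical Ottaviani decomposition, and taking $\sup_{P\in\cp}$ of the pathwise estimate $P(\max_{k\leq n}|S_k|\geq 2\alpha_n)\leq (1-c)^{-1}P(|S_n|\geq\alpha_n)$ delivers the stated inequality.
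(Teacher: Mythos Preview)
Your proposal is correct and follows the same classical Ottaviani decomposition as the paper: both introduce the disjoint first-passage events $T_k$ (the paper's $A_k$), use the triangle inequality on $T_k\cap\{|S_n-S_k|<\alpha_n\}$, and then take the supremum over $P\in\cp$. Your kernel-freezing construction of $P'\in\cp$ is a careful justification of the conditional bound $P(|S_n-S_k|\geq\alpha_n\mid\cf_k)\leq V(|S_n-S_k|\geq\alpha_n)\leq c$; the paper compresses this same step into the phrase ``by \eqref{e1}'' and (somewhat imprecisely) writes the unconditional $P(|S_n-S_k|\geq\alpha_n)$ in place of the conditional probability, but the underlying idea is identical.
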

\begin{remark}
The general Levy maximal inequality on the sublinear expectation space was firstly proved in Zhang \cite{zhang2019}, but additional $\ve$-condition is required.
\end{remark}

\section{Law of large numbers on the canonical space}

In this section, we establish the weak LLN on the canonical space by traditional truncation method and Chebyshev¡¯s inequality as in the classical probability theory.

Given a weakly compact and convex set $\cp_0$ of probability measures on $(\br,\cb(\br))$, for fixed $n\in\bn$, we can construct $\cp$ on $(\br^\bn,\cb(\br^\bn))$ by (\ref{e1}) with $\cp_i=\cp_0$ for $i\in\bn$, such that the sequence of canonical random variables $\{X_i\}_{i\in\bn}$ are i.i.d., where $X_i(\omega)=\omega_i$, $\forall \omega=(\omega_1,\cdots,\omega_n,\cdots)\in\br^\bn$. The sublinear expectation $\be^{\cp}$ and the capacities $v$ and $V$ corresponding to $\cp$ are defined as in the previous section.

Inspired by the weak LLN under sublinear expectation without the moment condition in Chen et al. \cite{chen}, we also have the similar LLN on the canonical space.

\begin{theorem}\label{th31}
Let $\{X_i\}_{i\in\bn}$ be a canonical i.i.d. sequence under $\cp$ which is constructed by (\ref{e1}), and we further assume that
$$\lim_{n\to\infty}nV(|X_1|\geq n)=0.$$
Then for any $\ve>0$,
\begin{equation}\label{llnv}
\lim_{n\to\infty}v\lt(\underline{\mu}_n-\ve \leq \frac{\sum_{i=1}^nX_i}{n} \leq \overline{\mu}_n+\ve\rt)=1,
\end{equation}
where $\underline{\mu}_n=-\be^{\cp}[(-n\vee -X_1)\wedge n]$ and $\overline{\mu}_n=\be^{\cp}[(-n\vee X_1)\wedge n]$.

Furthermore, if $\{\underline{\mu}_n\}_{n\in\bn}$ and $\{\overline{\mu}_n\}_{n\in\bn}$ are bounded, then
\begin{equation}\label{llnp}
\lim_{n\to\infty}\lt|\be^{\cp}\lt[\vp\lt(\frac{\sum_{i=1}^nX_i}{n}\rt)\rt]-\max_{\underline{\mu}_n\leq\mu\leq\overline{\mu}_n}\vp(\mu)\rt|=0, \ \ \forall \vp\in C_b(\br).
\end{equation}

\end{theorem}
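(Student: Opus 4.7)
The plan is to adapt the classical truncation-plus-Chebyshev proof of the weak LLN to the canonical sublinear setting, using Proposition \ref{prop23} to handle the fact that, under a general $P\in\cp$, the conditional means of the truncated variables are no longer constant but lie in $[\underline{\mu}_n,\overline{\mu}_n]$. Set $Y_i=(-n\vee X_i)\wedge n$, $Z_i=X_i-Y_i$ and $T_n=\sum_{i=1}^n Y_i$. Because $\{Z_i\neq 0\}\subset\{|X_i|>n\}$ and $V$ is countably subadditive with $V(|X_i|>n)=V(|X_1|>n)$ by the i.i.d.\ structure, the hypothesis gives $V(S_n\neq T_n)\le nV(|X_1|\ge n)\to 0$, so it suffices to prove (\ref{llnv}) with $T_n$ in place of $S_n$.

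Next, fix $P\in\cp$ and form the $P$-martingale $M_k=\sum_{i=1}^k(Y_i-E_P[Y_i|\cf_{i-1}])$. Orthogonality of its increments together with $|Y_i|\le n$ yields
\[
E_P[M_n^2]\le\sum_{i=1}^n E_P[Y_i^2]\le n\sup_{\nu\in\cp_0}E_\nu[Y_1^2].
\]
The layer-cake formula gives $E_\nu[Y_1^2]=2\int_0^n s\,\nu(|X_1|>s)\,ds\le 2\int_0^n sV(|X_1|>s)\,ds$; since $sV(|X_1|>s)\to 0$, a Ces\`aro argument shows this bound is $o(n)$. Chebyshev then yields $P(|M_n/n|\ge\ve)=o(1)$ uniformly in $P\in\cp$, so $V(|M_n/n|\ge\ve)\to 0$. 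By Proposition \ref{prop23}, $\tfrac{1}{n}\sum_{i=1}^n E_P[Y_i|\cf_{i-1}]\in[\underline{\mu}_n,\overline{\mu}_n]$ $P$-a.s., so on $\{|M_n/n|<\ve\}$ one has $T_n/n\in[\underline{\mu}_n-\ve,\overline{\mu}_n+\ve]$; combining with $V(S_n\neq T_n)\to 0$ via $v(A\cap B)\ge v(A)+v(B)-1$ delivers (\ref{llnv}).

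For the second assertion, the boundedness hypothesis places all $[\underline{\mu}_n,\overline{\mu}_n]$ inside a common compact interval $K$, and $\vp$ is uniformly continuous on a neighbourhood of $K$. The upper bound follows by splitting $E_P[\vp(S_n/n)]$ over $\{S_n/n\in[\underline{\mu}_n-\ve,\overline{\mu}_n+\ve]\}$ and its complement, applying (\ref{llnv}) and uniform continuity, then taking $\sup_P$. For the matching lower bound, pick $\mu_n^\ast\in[\underline{\mu}_n,\overline{\mu}_n]$ where $\vp$ attains its maximum. Since $\cp_0$ is convex and weakly compact, the image $\{E_\nu[Y_1]:\nu\in\cp_0\}$ is precisely the interval $[\underline{\mu}_n,\overline{\mu}_n]$, so some $\nu_n\in\cp_0$ satisfies $E_{\nu_n}[Y_1]=\mu_n^\ast$; taking $P_n=\nu_n^{\otimes\bn}\in\cp$ (allowed by \eqref{e1} with constant kernels) and rerunning the truncation/Chebyshev argument under the product measure $P_n$ gives $P_n(|S_n/n-\mu_n^\ast|\ge\ve)\to 0$, so bounded convergence and uniform continuity produce $E_{P_n}[\vp(S_n/n)]\to\vp(\mu_n^\ast)$, hence $\be^{\cp}[\vp(S_n/n)]\ge\vp(\mu_n^\ast)-o(1)$.

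The pivot of the whole argument is the $\cp_0$-uniform second-moment bound $\sup_{\nu\in\cp_0}E_\nu[Y_1^2]=o(n)$; this is what makes Chebyshev deliver $V(|M_n/n|\ge\ve)\to 0$ uniformly over all $P\in\cp$ and, simultaneously, validates the convergence under each product measure $P_n$ used in the lower-bound construction. I expect this uniformity — more than the truncation bookkeeping or the use of Proposition \ref{prop23} — to be the main technical obstacle, since it forces the layer-cake estimate to be applied to the upper capacity $V$ rather than to a fixed probability and requires that the rate at which $E_{P_n}[\vp(S_n/n)]\to\vp(\mu_n^\ast)$ be controlled independently of the choice of $\nu_n\in\cp_0$.
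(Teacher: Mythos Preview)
Your proposal is correct and follows essentially the same route as the paper: truncation $Y_i=(-n\vee X_i)\wedge n$, the martingale $M_n=\sum_i(Y_i-E_P[Y_i\mid\cf_{i-1}])$ controlled by Chebyshev together with the layer-cake bound $\be^{\cp}[Y_1^2]=o(n)$, Proposition~\ref{prop23} to confine the conditional means to $[\underline\mu_n,\overline\mu_n]$, and for the lower bound a product measure $P_n=\nu_n^{\otimes\bn}$ with $E_{\nu_n}[Y_1]=\mu_n^\ast$. The only slip is notational: writing ``$V(|M_n/n|\ge\ve)\to 0$'' is ill-posed since $M_n$ depends on $P$, but what you actually need and have is $\sup_{P\in\cp}P(|M_n/n|\ge\ve)\to 0$, from which $V\bigl(T_n/n\notin[\underline\mu_n-\ve,\overline\mu_n+\ve]\bigr)\to 0$ follows directly.
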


Now we establish Peng's LLN on the canonical space with the necessary and sufficient conditions.
\begin{theorem}\label{plln}
Let $\{X_i\}_{i\in\bn}$ be a canonical i.i.d. sequence under $\cp$ which is constructed by (\ref{e1}). Then
\begin{equation}\label{e33}
\lim_{n\to\infty}\be^{\cp}\lt[\vp\lt(\frac{\sum_{i=1}^nX_i}{n}\rt)\rt]=\max_{\underline{\mu}\leq\mu\leq\overline{\mu}}\vp(\mu), \ \ \forall \vp\in C_b(\br),
\end{equation}
where $-\infty<\lu\leq\ou<+\infty$, if and only if the following three conditions hold:
\begin{itemize}
\item[(i)] $\lim_{n\to\infty}nV(|X_1|\geq n)=0$.
\item[(ii)] $\lim_{n\to\infty}\be^{\cp}[(-n\vee X_1)\wedge n]=\ou$.
\item[(iii)] $\lim_{n\to\infty}-\be^{\cp}[(-n\vee -X_1)\wedge n]=\lu$.
\end{itemize}
\end{theorem}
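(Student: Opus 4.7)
\emph{Sufficiency.} Under (i)--(iii), conditions (ii) and (iii) force $\ou_n \to \ou$ and $\lu_n \to \lu$, hence $\{\ou_n\}$ and $\{\lu_n\}$ are bounded. Theorem~\ref{th31} then delivers $\bigl|\be^{\cp}[\vp(S_n/n)] - \max_{\lu_n \le \mu \le \ou_n} \vp(\mu)\bigr| \to 0$, and uniform continuity of $\vp \in C_b(\br)$ on a compact interval containing every $[\lu_n, \ou_n]$ and $[\lu, \ou]$ yields $\max_{[\lu_n, \ou_n]} \vp \to \max_{[\lu, \ou]} \vp$, which is (\ref{e33}).

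\emph{Necessity of (i).} Assume (\ref{e33}). Plugging $\vp \in C_b(\br)$ which vanishes on $[\lu,\ou]$ and dominates $\mathbf{1}_{\{|x| \ge K_0+1\}}$ (with $K_0 = |\lu| \vee |\ou|$) into (\ref{e33}) gives $V(|S_k/k| \ge K) \to 0$ as $k \to \infty$ for every $K > K_0$; combined with tightness of $\cp_1$ (which handles small $k$ via the crude bound $V(|S_k/k| \ge C) \le k V(|X_1| \ge C)$), this yields the uniform tightness $\sup_{k \ge 1} V(|S_k/k| \ge C) \to 0$ as $C \to \infty$. Fix $C$ large enough that this supremum is $\le 1/4$. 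The canonical i.i.d.\ structure produces the shift invariance $V(|S_n - S_k| \ge Cn) = V(|S_{n-k}| \ge Cn) \le V(|S_{n-k}|/(n-k) \ge C) \le 1/4$, so Proposition~\ref{otta} applies with $\alpha_n = Cn$ and yields $V(\max_{k \le n}|S_k|/n \ge 2C) \le \tfrac{4}{3} V(|S_n|/n \ge C) \to 0$. Since $|X_k| \le 2\max_j |S_j|$, this gives $V(\max_{k \le n}|X_k|/n \ge 4C) \to 0$, and the canonical i.i.d.\ identity $V(\max_{k \le n}|X_k| \ge M) = 1 - (1 - V(|X_1| \ge M))^n$ (obtained by restricting to constant kernels in $\cp$) then forces $nV(|X_1| \ge 4Cn) \to 0$; monotonicity in the threshold delivers (i).

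\emph{Necessity of (ii) and (iii).} For each $n$ choose $\mu_n^\ast \in \cp_1$ with $E_{\mu_n^\ast}[(-n \vee X_1) \wedge n] \ge \ou_n - 1/n$, and set $P_n^\ast = (\mu_n^\ast)^{\otimes \bn} \in \cp$. Writing $Z_i = (-n \vee X_i) \wedge n$, under $P_n^\ast$ the $Z_i$ are i.i.d.\ and bounded by $n$, while the tail formula together with (i) gives $E[Z_1^2] \le 2\int_0^n t\, V(|X_1|>t)\, dt + n^2 V(|X_1| > n) = o(n)$, so $\mathrm{Var}(S_n^{(n)}/n) = o(1)$ with $S_n^{(n)} = Z_1 + \cdots + Z_n$; Chebyshev then gives $S_n^{(n)}/n \to \ou_n$ in $P_n^\ast$-probability, and (i) yields $P_n^\ast(S_n \ne S_n^{(n)}) \le nV(|X_1| > n) \to 0$, so $S_n/n$ concentrates near $\ou_n$ under $P_n^\ast$. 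Comparing with the consequence $V(S_n/n > \ou+\ve) \to 0$ of (\ref{e33}) forces $\limsup \ou_n \le \ou$; the symmetric argument applied to $-X_1$ gives $\liminf \lu_n \ge \lu$. Since $\lu \le \lu_n \le \ou_n \le \ou + o(1)$, both sequences are bounded, so Theorem~\ref{th31} combined with (\ref{e33}) yields $\max_{[\lu_n, \ou_n]} \vp \to \max_{[\lu, \ou]} \vp$ for every $\vp \in C_b(\br)$; testing against $\vp(x) = (-K \vee x) \wedge K$ and its reflection (for $K$ large) extracts $\ou_n \to \ou$ and $\lu_n \to \lu$, which are (ii) and (iii).

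The principal obstacle is the necessity of (i): one must convert distributional concentration of $S_n/n$ near $[\lu,\ou]$ into the sharp individual tail rate $nV(|X_1| \ge n) \to 0$, which requires both the uniform tightness of $\{S_k/k\}_{k \ge 1}$ and a careful exploitation of the shift invariance of $V$ inside Ottaviani's inequality, since the classical Feller-style symmetrization argument is not directly available on a sublinear expectation space.
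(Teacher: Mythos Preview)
Your proposal is correct and follows the same architecture as the paper: sufficiency via Theorem~\ref{th31}; necessity of (i) via Ottaviani's inequality (Proposition~\ref{otta}) combined with the product--measure lower bound $V(\max_{k\le n}|X_k|\ge M)\ge 1-(1-V(|X_1|\ge M))^{n}$; and necessity of (ii)--(iii) by feeding (i) back into Theorem~\ref{th31} and comparing with (\ref{e33}).

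The implementation differs in two places worth noting. To verify the Ottaviani hypothesis $\max_k V(|S_n-S_k|\ge\alpha_n)\le c$, the paper uses subadditivity, bounding $V(|S_n-S_k|\ge n\hat\mu)$ by $V(|S_k|\ge k\hat\mu/2)+V(|S_n|\ge n\hat\mu/2)$ and restricting to $k\ge N$; you instead use the shift bound $V(|S_n-S_k|\ge Cn)\le V(|S_{n-k}|\ge Cn)$ together with a uniform--tightness argument for $\{S_k/k\}_{k\ge 1}$ that exploits weak compactness of $\cp_0$ to handle small $k$. Both work; the paper's version avoids appealing to tightness of $\cp_0$, while yours avoids the slightly awkward restriction to $k\ge N$ in the maximal inequality. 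For (ii)--(iii), the paper jumps straight from (i) to the second conclusion of Theorem~\ref{th31} without explicitly checking that $\{\lu_n\},\{\ou_n\}$ are bounded; you fill this in by building the product measure $P_n^\ast=(\mu_n^\ast)^{\otimes\bn}\in\cp$ under which $S_n/n$ concentrates near $\ou_n$ and comparing with $V(S_n/n>\ou+\ve)\to 0$, which is a genuinely cleaner justification. One small remark: your parenthetical ``obtained by restricting to constant kernels'' only justifies the inequality $\ge$ in the max--tail identity, not equality, but $\ge$ is all that is needed.
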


\section{Law of large numbers on the sublinear expectation space}

In this section, we extend the obtained weak LLN to the general sublinear expectation space $(\Omega,\mathcal{H},\be)$ introduced by Peng \cite{Peng2019}.

Let $\Omega$ be a given set and let $\mathcal{H}$ be a linear space of real
functions defined on $\Omega$ such that $c\in\mathcal{H}$ for $c\in\br$, and if $X\in\mathcal{H}$, then $|X|\in\mathcal{H}$. We further assume that
$X_{1},\ldots,X_{n}\in \mathcal{H}$, then $\varphi(X_{1},\cdots,X_{n}%
)\in \mathcal{H}$ for each $\varphi \in C_{b.Lip}(\mathbb{R}^{n})$, where
$C_{b.Lip}(\mathbb{R}^{n})$ denotes the space of all bounded and  Lipschitz
functions on $\mathbb{R}^{n}$. $\mathcal{H}$ is considered as the space of
random variables.

\begin{definition}
A sublinear expectation $\be$ on $\mathcal{H}$ is a functional $\be:\mathcal{H}\rightarrow \mathbb{R}$ satisfying the following
properties: for all $X,Y\in \mathcal{H}$, we have
\begin{description}
\item[(a)] Monotonicity: $\be[X]\geq\be[Y]$ if
$X\geq Y$.

\item[(b)] Constant preserving: $\be[c]=c$ for $c\in \mathbb{R}$.

\item[(c)] Sub-additivity: $\be[X+Y]\leq \be[X]+\be[Y]$.

\item[(d)] Positive homogeneity: $\be[\lambda X]=\lambda\be[X]$ for $\lambda \geq0$.
\end{description}

The triple $(\Omega,\mathcal{H},\be)$ is called a sublinear
expectation space.
\end{definition}

Let $X=(X_1,\cdots,X_n)$ be a given $n$-dimensional random vector on a sublinear expectation space $(\Omega,\mathcal{H},\be)$. We define a functional on $C_{b.Lip}(\br^n)$ by
$$\mathbb{{F}}_X[\vp]:=\be[\vp(X)], \ \ \forall \vp\in C_{b.Lip}(\br^n).$$
The triple $(\br^n, C_{b.Lip}(\br^n),\mathbb{\hat{F}}_X[\cdot])$ forms a sublinear expectation space, and $\mathbb{\hat{F}}_X$ is called the sublinear distribution of $X$.

\begin{definition}
Let $X$ and $Y$ be two random variables on $(\Omega,\mathcal{H},\be)$. $X$ and $Y$ are called identically distributed, denoted by
$X\overset{d}{=}Y$, if for each $\varphi \in C_{b.Lip}(\mathbb{R})$,
\[
\be[\varphi(X)]=\be[\varphi(Y)].
\]

\end{definition}

\begin{definition}
\label{new-de1}Let $\left \{  X_{i}\right \}  _{i\in\bn}$ be a sequence of
random variables on $(\Omega,\mathcal{H},\be)$. $X_{i}$ is said
to be independent of $\left(  X_{1},\ldots,X_{i-1}\right)  $ under
$\be$, if for each $\vp\in C_{b.Lip}(\br^{n})$,
$$
\be\left[\varphi\left(X_{1}, \cdots, X_{i}\right)\right]=\be\left[\left.\be\left[\varphi\left(x_{1}, \cdots, x_{i-1},X_i\right)\right]\right|_{\left(x_{1}, \cdots, x_{i-1}\right)=\left(X_{1}, \cdots, X_{i-1}\right)}\right].
$$
The sequence of random variables $\left \{  X_{i}\right \}  _{i\in\bn}$ is
said to be independent, if $X_{i+1}$ is independent of $\left(  X_{1}%
,\ldots,X_{i}\right)  $ for each $i\geq1$.
\end{definition}

If the sublinear distribution of $X_1$ is given, then there exists a unique weakly compact and convex set of probability measures $\cp_0$ on $(\br,\cb(\br))$ such that
\begin{equation}\label{phi}
\be[\vp(X_1)]=\mathbb{E}^{\cp_0}[\vp], \ \forall \vp\in C_{b.Lip}(\br),
\end{equation}
where the existence was proved in Hu and Li \cite{hl} and the uniqueness was proved in Li and Lin \cite{li}.

The relation between the i.i.d. sequence on sublinear expectation space and the one on canonical space is characterized by the following proposition, which was proved in Li and Peng \cite{li2021}.
\begin{proposition}\label{p44}
Let $\{X_i\}_{i\in\bn}$ be an i.i.d. sequence on sublinear expectation space $(\Omega,\mathcal{H},\be)$ and the sublinear distribution of $X_1$ can be represented by $\cp_0$ as in (\ref{phi}). Then there exists an sequence of i.i.d. canonical random variables  $\{Y_i\}_{i\in\bn}$ on canonical space $(\br^\bn,\cb(\br^\bn))$ with the set of probability measures $\cp$ constructed by (\ref{e1}) with $\cp_i=\cp_0$ such that for each $n\in\bn$ and $\vp\in C_{b.Lip}(\br^n)$,
$$\be[\vp(X_1,\cdots,X_n)]=\mathbb{E}^{\cp}[\vp(Y_1,\cdots,Y_n)].$$
\end{proposition}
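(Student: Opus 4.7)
The plan is to proceed by induction on $n$, layer by layer matching $\be$ on $\Omega$ with $\pe$ on the canonical space via Peng's independence identity on both sides, with the representation (\ref{phi}) serving as the glue. The base case $n=1$ is immediate: by (\ref{phi}) and since the first marginal of every $P\in\cp$ runs over $\cp_0=\cp_1$, we have $\be[\vp(X_1)]=\mathbb{E}^{\cp_0}[\vp]=\pe[\vp(Y_1)]$.

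For the inductive step, suppose the identity holds at level $n-1$ and fix $\vp\in C_{b.Lip}(\br^n)$. I would introduce
$$\psi(x_1,\ldots,x_{n-1}):=\be\lt[\vp(x_1,\ldots,x_{n-1},X_n)\rt]=\mathbb{E}^{\cp_0}\lt[\vp(x_1,\ldots,x_{n-1},\cdot)\rt],$$
the second equality being (\ref{phi}) applied with $X_n\overset{d}{=}X_1$. A short check, using that a sup of integrals over a fixed family of measures preserves boundedness and the Lipschitz constant, shows $\psi\in C_{b.Lip}(\br^{n-1})$. Definition \ref{new-de1} applied to $X_n$ independent of $(X_1,\ldots,X_{n-1})$ then yields $\be[\vp(X_1,\ldots,X_n)]=\be[\psi(X_1,\ldots,X_{n-1})]$, and the inductive hypothesis applied to $\psi$ gives $\be[\psi(X_1,\ldots,X_{n-1})]=\pe[\psi(Y_1,\ldots,Y_{n-1})]$.

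It remains to verify the canonical-side identity $\pe[\vp(Y_1,\ldots,Y_n)]=\pe[\psi(Y_1,\ldots,Y_{n-1})]$. The ``$\leq$'' direction is direct: for every $P\in\cp$ with kernel decomposition $\mu_1,\kappa_2,\ldots,\kappa_n$ as in (\ref{e1}), disintegrating $E_P[\vp(Y_1,\ldots,Y_n)]$ along $\cf_{n-1}$ shows that the innermost integral is dominated by $\psi(Y_1,\ldots,Y_{n-1})$, because $\kappa_n(y_1,\ldots,y_{n-1},\cdot)\in\cp_0$. The ``$\geq$'' direction is where the real work lies: given $\ve>0$, I would invoke a measurable selection theorem (for instance Kuratowski--Ryll-Nardzewski, exploiting the weak compactness of $\cp_0$ and the continuity of $\mu\mapsto\int\vp(y_1,\ldots,y_{n-1},y)\,\mu(dy)$) to produce a measurable kernel $(y_1,\ldots,y_{n-1})\mapsto\kappa_n^{\ve}(y_1,\ldots,y_{n-1},\cdot)\in\cp_0$ with
$$\int\vp(y_1,\ldots,y_{n-1},y)\,\kappa_n^{\ve}(y_1,\ldots,y_{n-1},dy)\geq\psi(y_1,\ldots,y_{n-1})-\ve;$$
splicing $\kappa_n^{\ve}$ onto the kernels of any $P'\in\cp$ nearly maximal for $\psi$ produces a $P\in\cp$ with $E_P[\vp(Y_1,\ldots,Y_n)]\geq E_{P'}[\psi(Y_1,\ldots,Y_{n-1})]-\ve$, and taking the sup over $P'$ and letting $\ve\downarrow0$ finishes the step. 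The measurable selection is the one genuinely nontrivial ingredient; the remaining steps are a bookkeeping exercise that leans on independence and (\ref{phi}).
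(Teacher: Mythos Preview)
The paper does not actually prove Proposition~\ref{p44}; it merely states the result and attributes the proof to Li and Peng~\cite{li2021}. So there is no in-paper argument to compare against.

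That said, your outline is the natural one and is correct. The induction on $n$ together with Definition~\ref{new-de1} on the abstract side and the kernel disintegration from (\ref{e1}) on the canonical side is exactly how such transfer results are proved. Your verification that $\psi\in C_{b.Lip}(\br^{n-1})$ is fine (the Lipschitz constant survives the supremum), and you correctly identify the one substantive technical point: producing a \emph{measurable} near-optimal kernel $\kappa_n^{\ve}$ so that the spliced law actually lies in $\cp$. Since $\cp_0$ is weakly compact (hence a compact metrizable, thus Polish, space) and $(y,\mu)\mapsto\int\vp(y_1,\ldots,y_{n-1},y)\,\mu(dy)$ is jointly continuous for $\vp\in C_{b.Lip}$, the set-valued map of $\ve$-maximizers has nonempty closed values and closed graph, so Kuratowski--Ryll-Nardzewski indeed applies; in fact compactness even lets you select an exact maximizer and drop the $\ve$. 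One small remark worth making explicit in a full write-up: a Borel map into $\cp_0\subset M_1(\br)$ (weak topology) automatically yields a probability kernel in the sense of (i)--(ii) after (\ref{e1}), because $\mu\mapsto\mu(B)$ is Borel measurable on $M_1(\br)$ for every $B\in\cb(\br)$.
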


Now we extend Peng's LLN (Theorem \ref{plln}) from canonical space to sublinear expectation space. We need to characterize  condition (i) of Theorem \ref{plln} directly by sublinear expectation $\be$ instead of capacity $V$. For each $n\in\bn$, we define $\psi_n\in C_{b.Lip}(\br)$ as
$$\psi_n(x)=\sup_{y\in\br}\{n\mathbf{1}_{\{|y|\geq n\}}-n|y-x|\}.$$
\begin{proposition}\label{cvp}
For each $n\in\bn$, we have
\begin{equation}\label{ee42}
n\mathbf{1}_{\{|x|\geq n\}}\leq\psi_n(x)\leq n\mathbf{1}_{\{|x|\geq n-1\}}.
\end{equation}
\end{proposition}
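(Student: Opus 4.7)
The plan is to verify both inequalities directly from the definition $\psi_n(x)=\sup_{y\in\br}\{n\mathbf{1}_{\{|y|\geq n\}}-n|y-x|\}$ by choosing a suitable $y$ for the lower bound and by a case analysis on $|y|$ versus $n$ for the upper bound.

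For the lower bound $n\mathbf{1}_{\{|x|\geq n\}}\leq \psi_n(x)$, I would simply take $y=x$ in the supremum. This gives $\psi_n(x)\geq n\mathbf{1}_{\{|x|\geq n\}}-n|x-x|=n\mathbf{1}_{\{|x|\geq n\}}$, which is exactly the desired inequality. (Observe also that $\psi_n(x)\geq 0$ by the same choice, which will be useful in the upper bound.)

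For the upper bound $\psi_n(x)\leq n\mathbf{1}_{\{|x|\geq n-1\}}$, I would split into two cases. If $|x|\geq n-1$, then for every $y\in\br$ the quantity $n\mathbf{1}_{\{|y|\geq n\}}-n|y-x|$ is bounded above by $n$, since $n\mathbf{1}_{\{|y|\geq n\}}\leq n$ and $-n|y-x|\leq 0$; taking the supremum yields $\psi_n(x)\leq n = n\mathbf{1}_{\{|x|\geq n-1\}}$. If instead $|x|<n-1$, then for $y$ with $|y|<n$ the expression is $-n|y-x|\leq 0$, and for $y$ with $|y|\geq n$ the reverse triangle inequality gives $|y-x|\geq |y|-|x|>n-(n-1)=1$, so $n\mathbf{1}_{\{|y|\geq n\}}-n|y-x|< n-n=0$. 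Therefore $\psi_n(x)\leq 0 = n\mathbf{1}_{\{|x|\geq n-1\}}$ in this case as well.

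There is no real obstacle here; the statement is an elementary property of the Lipschitz mollification of the indicator $n\mathbf{1}_{\{|\cdot|\geq n\}}$ by sup-convolution with the kernel $-n|\cdot|$. The only minor point to watch is the strict inequality $|y|-|x|>1$ in the second case, which is what forces the cutoff to move from $n$ to $n-1$ between the lower and upper bounds.
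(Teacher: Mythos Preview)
Your proof is correct and follows essentially the same approach as the paper. The lower bound is identical (take $y=x$); for the upper bound the paper splits the supremum over $y$ according to $|y-x|\leq 1$ versus $|y-x|>1$, whereas you split first on $|x|\geq n-1$ versus $|x|<n-1$ and then on $|y|$, but the underlying observation---that $|y|\geq n$ and $|x|<n-1$ force $|y-x|>1$---is the same in both.
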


\begin{theorem}\label{th46}
Let $\{X_i\}$ be an i.i.d. sequence on $(\Omega,\mathcal{H},\be)$. Then
\begin{equation}
\label{ee41}\lim_{n\rightarrow\infty}\be\lt[\vp\lt(\frac{\sum_{i=1}^nX_i}{n}\rt)\rt]=\max_{\lu\leq\mu\leq\ou}\vp(\mu), \ \ \forall \vp\in C_b(\br).
\end{equation}
if and only if the following three conditions hold:
\begin{itemize}
\item[(i)] $\lim_{n\rightarrow\infty}\be[\psi_n(X_1)]=0.$
\item[(ii)] $\lim_{n\rightarrow\infty}\be[(-n\vee X_1)\wedge n]=\ou$.
\item[(iii)] $\lim_{n\rightarrow\infty}-\be[(-n\vee -X_1)\wedge n]=\lu$.
\end{itemize}
\end{theorem}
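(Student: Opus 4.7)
The plan is to reduce Theorem~\ref{th46} to its canonical-space counterpart Theorem~\ref{plln} via Proposition~\ref{p44}, and then to rewrite the capacity condition $\lim nV(|X_1|\geq n)=0$ appearing in Theorem~\ref{plln} purely in terms of $\be$ using the Lipschitz functions $\psi_n$ and the sandwich of Proposition~\ref{cvp}. Since the maps $x\mapsto(-n\vee x)\wedge n$ and $\psi_n$ both lie in $C_{b,Lip}(\br)$, this reformulation is automatic for conditions~(ii)--(iii) and requires genuine work only for condition~(i).

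By Proposition~\ref{p44}, I can realize $\{X_i\}$ via a canonical i.i.d.\ sequence $\{Y_i\}$ with the same one-dimensional sublinear distribution, so that $\be[\phi(X_1,\ldots,X_n)]=\be^{\cp}[\phi(Y_1,\ldots,Y_n)]$ for every $\phi\in C_{b,Lip}(\br^n)$. This identifies $\be[(-n\vee X_1)\wedge n]$ with $\be^{\cp}[(-n\vee Y_1)\wedge n]$ and $\be[\psi_n(X_1)]$ with $\be^{\cp}[\psi_n(Y_1)]$. Applying Proposition~\ref{cvp} inside $\be^{\cp}$ then yields
\[
nV(|Y_1|\geq n)\leq \be[\psi_n(X_1)]\leq nV(|Y_1|\geq n-1),
\]
from which $\lim_{n\to\infty}\be[\psi_n(X_1)]=0$ is equivalent to $\lim_{n\to\infty}nV(|Y_1|\geq n)=0$; the converse direction uses $nV(|Y_1|\geq n-1)=\frac{n}{n-1}(n-1)V(|Y_1|\geq n-1)$. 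Thus the three conditions of Theorem~\ref{th46} match those of Theorem~\ref{plln} applied to $\{Y_i\}$, and the latter provides the ``if and only if'' for the canonical LLN of $\{Y_i\}$. Finally, for $\vp\in C_{b,Lip}(\br)$ the composition $(x_1,\ldots,x_n)\mapsto\vp\bigl(\frac{x_1+\cdots+x_n}{n}\bigr)$ lies in $C_{b,Lip}(\br^n)$, so Proposition~\ref{p44} transfers the canonical limit to give $\be[\vp(\frac{1}{n}\sum_{i=1}^n X_i)]\to\max_{\lu\leq\mu\leq\ou}\vp(\mu)$; the extension from $C_{b,Lip}(\br)$ to $C_b(\br)$ follows by uniform approximation on a sufficiently large compact interval by Lipschitz functions bounded by $\|\vp\|_\infty$, combined with the asymptotic concentration of $\frac{1}{n}\sum_{i=1}^n X_i$ established through condition~(i) and the truncation/Chebyshev argument already used for Theorem~\ref{th31}.

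The main technical obstacle is the sandwich above: the capacity $V$ has no direct meaning on $(\Omega,\mathcal{H},\be)$, and $\mathbf{1}_{\{|x|\geq n\}}$ is not Lipschitz, so the tail condition of Theorem~\ref{plln} cannot be transferred through Proposition~\ref{p44} alone. Proposition~\ref{cvp} is engineered precisely to squeeze this indicator between two $C_{b,Lip}$-admissible expressions, and the mild shift from $n$ to $n-1$ on the upper side, absorbed by the factor $\frac{n}{n-1}\to 1$, is the decisive detail that makes the bracket collapse in the limit. Extending from $C_{b,Lip}$ to $C_b$ test functions is more standard but still requires verifying the tightness of $\frac{1}{n}\sum_{i=1}^n X_i$, for which Theorem~\ref{th31} is the natural input.
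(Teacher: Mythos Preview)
Your proposal is correct and follows essentially the same route as the paper: transfer to the canonical space via Proposition~\ref{p44}, invoke Theorem~\ref{plln} there, and use the sandwich of Proposition~\ref{cvp} together with the factor $\frac{n}{n-1}\to 1$ to show that condition~(i) here is equivalent to the capacity condition $nV(|Y_1|\geq n)\to 0$ on the canonical side. If anything, you are more explicit than the paper on one point: the paper simply asserts the transfer identity $\be[\vp(\frac{1}{n}\sum X_i)]=\be^{\cp}[\vp(\frac{1}{n}\sum Y_i)]$ for all $\vp\in C_b(\br)$, whereas Proposition~\ref{p44} is stated only for $C_{b,Lip}$ test functions, and you correctly outline the extra approximation-plus-tightness step needed to pass from $C_{b,Lip}$ to $C_b$.
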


In the end of this section, we provide some examples. The first one shows that there exists a random variable $X$ such that $\lim_{n\to\infty}\be[\psi_n(X)]=0$ but condition (\ref{uc}) does not hold, i.e., $\lim_{\lambda\to\infty}\be[(|X|-\lambda)^+]\neq0$.

\begin{example}
\label{Exm3}Let $\Omega=\mathbb{N}$, $\mathcal{P}=\{P_{n}:n\in
\mathbb{N}\}$ where $P_{1}(\{1\})=1$ and
$P_{n}(\{1\})=1-\frac{1}{n^{2}}$,  $P_{n}(\{kn\})=\frac{1}{n^{3}}$,
$k=1,2,\ldots,n$, for $n=2,3,\cdots$. We consider a function $X$ on $\mathbb{N}$
defined by $X(n)=n$, $n\in \mathbb{N}$. We can calculate that $\lim_{n\to\infty}\be^{\cp}[\psi_n(X)]=0$ but
$\lim_{\lambda\to\infty}\mathbb{E}^{\cp}[(|X|-\lambda)^+]=\frac
{1}{2}\neq0$.
\end{example}

The next example shows that the Peng's LLN may fail only with the first moment condition, which is different from the classical LLN.

\begin{example}
Let $\Omega=\bn$, $\cp=\{P_n:n\in\mathbb{N}\}$, where $P_n(\{0\})=1-\frac{1}{n}$ and $P_n(\{n\})=\frac{1}{n}$ for $n=1,\cdots,$. Consider a function $X$ on $\mathbb{N}$ defined by $X(n)=n$, $n\in\mathbb{N}$. The sublinear expectation $\be$ is defined by $\be[\cdot]=\sup_{P\in\cp}E_P[\cdot]$. It is clear that $\be[X]=-\be[-X]=1$.  By Peng \cite{Peng2019}, we can construct i.i.d sequence $\{X_i\}_{i\in\bn}$ under $\be$ such that $X_i$ have the same distribution with $X$. But the weak law of large numbers does not hold, i.e., we can find $\vp\in C_b(\br)$, such that
$$\lim_{n\to\infty}\be\lt[\vp\lt(\frac{S_n}{n}\rt)\rt]\neq\vp(1).$$
\end{example}
Indeed, we consider $\vp(x)=1\wedge(1-x)^+$. In this case, by simple calculation, we can obtain $\be[\vp(\frac{x+X}{n})]=\vp(\frac{x}{n}).$ Then we have
\begin{align*}
  \be\lt[\vp\lt(\frac{X_1+\cdots+X_n}{n}\rt)\rt]
  &=\be\lt[\be\lt[\vp\lt(\frac{x+X_n}{n}\rt)\rt]\lt|_{x=X_1+\cdots+X_{n-1}}\rt.\rt]\\
  &=\be\lt[\vp\lt(\frac{X_1+\cdots+X_{n-1}}{n}\rt)\rt]=\cdots=\be\lt[\vp\lt(\frac{X_1}{n}\rt)\rt]=\vp(0)=1.
\end{align*}
which implies that,
$$\lim_{n\rightarrow\infty}\be\lt[\vp\lt(\frac{X_1+\cdots+X_n}{n}\rt)\rt]=1\neq\vp(1)=0.$$

%
%
%
%
%
%

\section{Proofs}

We firstly prove Ottaviani's inequality.
\begin{proof}[Proof of Proposition \ref{otta}]
For $k=1,\cdots,n$, let
$$A_k=\lt\{\max_{l\leq k-1}|S_l|<2\alpha_n, |S_k|\geq 2\alpha_n\rt\},$$
Then
$$V\lt(\max_{1\leq k\leq n}|S_k|\geq 2\alpha_n\rt)=\be^\cp[\sum_{k=1}^n\mathbf{I}_{A_k}].$$
We have
\begin{align*}
\mathbf{1}_{\{|S_n|\geq \alpha_n\}}
&\geq \sum_{k=1}^n\mathbf{1}_{A_k}\mathbf{1}_{\{|S_n|\geq\alpha_n\}}
\geq\sum_{k=1}^n\mathbf{1}_{A_k}\mathbf{1}_{\{|S_n-S_k|<\alpha_n\}}\\
&=\sum_{k=1}^n\mathbf{1}_{A_k}(1-\mathbf{1}_{\{|S_n-S_k|\geq\alpha_n\}})
=(1-c)\sum_{k=1}^n\mathbf{1}_{A_k}+\sum_{k=1}^n\mathbf{1}_{A_k}(c-\mathbf{1}_{\{|S_n-S_k|\geq\alpha_n\}})
\end{align*}
which implies that
$$(1-c)\sum_{k=1}^n\mathbf{1}_{A_k}\leq\mathbf{1}_{\{|S_n|\geq\alpha_n\}}+\sum_{k=1}^n\mathbf{1}_{A_k}(\mathbf{1}_{\{|S_n-S_k|\geq \alpha_n\}}-c).$$
For each $P\in\cp$, by (\ref{e1}), we obtain
\begin{align*}
(1-c)P\lt(\max_{k\leq n}|S_k|\geq 2\alpha_n\rt)&\leq P(|S_n|\geq\alpha_n)+\sum_{k=1}^nP(A_k)(P(|S_n-S_k|\geq\alpha_n)-c)\\
&\leq P(|S_n|\geq\alpha_n)\leq V(|S_n|\geq\alpha_n).
\end{align*}
The proof is completed.
\end{proof}

Now we give the proofs of LLNs on canonical space.

\begin{proof}[Proof of Theorem \ref{th31}]
Let $\tilde{X}_i=(-n\vee X_i)\wedge n$, $1\leq i\leq n$, $S_n=\sum_{i=1}^nX_i$ and $T_n=\sum_{i=1}^n\tilde{X}_i$.

For each $P\in\cp$, Proposition \ref{prop23} implies that
$$-\be^{\cp}[-\tilde{X}_i]\leq E_P[\tilde{X}_i|\cf_{i-1}]\leq\be^{\cp}[\tilde{X}_i], \ \ \forall i\in\bn.$$

By Chebyshev's inequality, we have
\begin{align*}
P\lt(\frac{\sum_{i=1}^n\lt(\tilde{X}_i-E_P[\tilde{X}_i|\cf_{i-1}]\rt)}{n}>\ve\rt)&\leq\frac{\sum_{i=1}^nE_P\lt[\lt(\tilde{X}_i-E_P[\tilde{X}_i|\cf_{i-1}]\rt)^2\rt]}{n^2\ve^2}\leq\frac{2\sum_{i=1}^nE_P[|\tilde{X}_i|^2]}{n^2\ve^2}\\
&\leq \frac{2\sum_{i=1}^n\be^{\cp}[|\tilde{X}_i|^2]}{n^2\ve^2}=\frac{2\be^{\cp}[|\tilde{X}_1|^2]}{n\ve^2}.
\end{align*}
Noting that,
$$P(S_n\neq T_n)\leq\sum_{i=1}^nP(|X_i|\geq n)\leq \sum_{i=1}^nV(|X_i|\geq n)=nV(|X_1|\geq n),$$
thus
\begin{align*}
P\lt(\frac{S_n}{n}>\ou_n+\ve\rt)\leq& P(S_n\neq T_n)+P\lt(\frac{\sum_{i=1}^n\lt(\tilde{X}_i-E_P[\tilde{X}_i|\cf_{i-1}]\rt)}{n}>\frac{\ve}{2}\rt)\\
&+P\lt(\frac{\sum_{i=1}^nE_P[\tilde{X}_i|\cf_{i-1}]}{n}>\ou_n+\frac{\ve}{2}\rt)\\
\leq &nV(|X_1|\geq n)+\frac{8}{n\ve^2}\be^{\cp}[|\tilde{X}_1|^2]+P\lt(\frac{\sum_{i=1}^n\be^{\cp}[\tilde{X}_i]}{n}>\ou_n+\frac{\ve}{2}\rt)\\
=&nV(|X_1|\geq n)+\frac{8}{n\ve^2}\be^{\cp}[|\tilde{X}_1|^2]
\end{align*}
We claim that
$$\lim_{n\to\infty}\frac{1}{n}\be^{\cp}[|\tilde{X}_1|^2]=0.$$
Indeed,
\begin{align*}
\frac{1}{n}\be^{\cp}[|\tilde{X}_1|^2]&\leq\frac{1}{n}\sup_{P\in\cp}E_P[|X_1|^2\mathbf{1}_{\{|X_1|\leq n\}}]+nV(|X_1|\geq n)\\
&\leq\frac{2}{n}\sup_{P\in\cp}\int_0^ntP(|X_1|>t)dt+nV(|X_1|\geq n)\\
&\leq \frac{2}{n}\int_0^ntV(|X_1|>t)dt+nV(|X_1|\geq n)\to 0,
\end{align*}
since $nV(|X_1|\geq n)\to 0$.

Thus we imply
$$V\lt(\frac{S_n}{n}>\ou_n+\ve\rt)\rightarrow 0,$$
Similarly, we can prove that $V(\frac{S_n}{n}<\lu_n-\ve)\rightarrow 0$. Therefore, (\ref{llnv}) holds.

The proof of (\ref{llnp}) is divided into two steps.

Firstly, for each $\ve>0$, we have
$$\be^{\cp}\lt[\vp\lt(\frac{S_n}{n}\rt)\rt]\leq\max_{\lu_n-\ve\leq\mu\leq\ou_n+\ve}\vp(\mu)+\max_{\mu\in\br}\vp(\mu)\lt(V\lt(\frac{S_n}{n}>\ou_n+\ve\rt)+V\lt(\frac{S_n}{n}<\lu_n-\ve\rt)\rt).$$
Let $n\rightarrow\infty$, we obtain
$$\limsup_{n\rightarrow\infty}\lt\{\be^{\cp}\lt[\vp\lt(\frac{S_n}{n}\rt)\rt]-\max_{\lu_n-\ve\leq\mu\leq\ou_n+\ve}\vp(\mu)\rt\}\leq 0, \ \ \ \forall \vp\in C_{b}(\br).$$
Since $\vp$ is continuous and $\{\lu_n\}_{n\in\bn}$ and $\{\ou_n\}_{n\in\bn}$ are bounded, let $\ve\rightarrow 0$, we imply that,
\begin{equation}\label{eq1}
\limsup_{n\rightarrow\infty}\lt\{\be\lt[\vp\lt(\frac{S_n}{n}\rt)\rt]-\max_{\lu_n\leq\mu\leq\ou_n}\vp(\mu)\rt\}\leq 0, \ \ \ \forall \vp\in C_{b}(\br).
\end{equation}

Secondly, for fixed $\vp\in C_{b}(\br)$ with $|\vp(x)|\leq C_\vp$,  and for each $\tilde{\ve}>0$, there exists  $\mu_n\in(\lu_n,\ou_n)$ such that $\vp(\mu_n)\geq \max_{\lu_n\leq\mu\leq\ou_n}\vp(\mu)-\tilde{\ve}$. By the construction of $\cp$,  we can find $P_*\in\cp$ such that $\{X_i\}$ is an i.i.d. sequence under $P_*$ with $E_{P_*}[\tilde{X}_1]=\mu_n$. In this case, by the classical weak law of large numbers, for each $\ve>0$,
$$P_*\lt(\lt|\frac{S_n}{n}-\mu_n\rt|>\ve\rt)\rightarrow 0.$$
By the continuity of $\vp$, there exists $\delta>0$ such that $|\vp(x)-\vp(y)|<\ve$ provided $|x-y|<\delta$. Then
$$\lt|E_{P_*}\lt[\vp\lt(\frac{S_n}{n}\rt)\rt]-\vp(\mu_n)\rt|\leq \ve+2C_\vp P_*\lt(\lt|\frac{S_n}{n}-\mu_n\rt|>\delta\rt),$$
which implies that
$$\lim_{n\rightarrow\infty}\lt|E_{P_*}\lt[\vp\lt(\frac{S_n}{n}\rt)\rt]-\vp(\mu_n)\rt|=0.$$
Finally, for each $\tilde{\ve}>0$,

\begin{equation}\label{eq2}
\liminf_{n\rightarrow\infty}\lt\{\be^{\cp}\lt[\vp\lt(\frac{S_n}{n}\rt)\rt] - \max_{\lu_n \leq \mu \leq \ou_n} \vp(\mu)\rt\} \geq \liminf_{n\rightarrow\infty}\lt\{E_{P_*}\lt[\vp\lt(\frac{S_n}{n}\rt)\rt]-\vp(\mu_n)\rt\}-\tilde{\ve}=-\tilde{\ve},
\end{equation}
Combining (\ref{eq1}) and (\ref{eq2}), we can see that (\ref{llnp}) holds.
\end{proof}

Now we prove the sufficient and necessary conditions for Peng's LLN on the canonical space.

\begin{proof}[Proof of Theorem \ref{plln}]
If (i)-(iii) holds, then by Theorem \ref{th31}, (\ref{e33}) holds. Conversely, we assume that (\ref{e33}) holds.

Let $\hat{\mu}=2[\ou]+2>2\ou$, which is an integer. By (\ref{e33}), there exists $N$ such that
$$V\lt(|S_n|\geq \frac{n\hat{\mu}}{2}\rt)\leq \frac{1}{4},\ \  \forall n>N.$$
Then for $N\leq k\leq n$, we have
\begin{align*}
V(|S_k-S_n|\geq n\hat{\mu})&\leq V\lt(|S_k|\geq \frac{n\hat{\mu}}{2}\rt)+V\lt(|S_n|\geq \frac{n\hat{\mu}}{2}\rt)\\
&\leq V\lt(|S_k|\geq \frac{k\hat{\mu}}{2}\rt)+V\lt(|S_n|\geq \frac{n\hat{\mu}}{2}\rt)\leq\frac{1}{2}.
\end{align*}
Applying Ottaviani's inequality, we obtain
$$V(\max_{N\leq k\leq n}|S_k|\geq  2n\hat{\mu})\leq 2V(|S_n|\geq  n\hat{\mu}),$$
which implies that
\begin{align*}
V(\max_{N+1\leq k\leq n}|X_k|\geq  4n\hat{\mu})
&\leq V(\max_{N+1\leq k\leq n}|S_k|\geq  2n\hat{\mu})+V(\max_{N\leq k\leq n-1}|S_k|\geq  2n\hat{\mu})\\
&\leq 4V(|S_n|\geq  n\hat{\mu}).
\end{align*}
On the other hand, by the construction of $P$ in (\ref{e1}),
\begin{align*}
V(\max_{N+1\leq k\leq n}|X_k|\geq 4n\hat{\mu})
&=1-v(\max_{N+1\leq k\leq n}|X_k|< 4n\hat{\mu})\\
&=1-\Pi_{k=N+1}^nv(|X_k|< 4n\hat{\mu})\\
&=1-\lt(1-V(|X_1|\geq 4n\hat{\mu})\rt)^{n-N}\\
&\geq1-e^{-(n-N)V(|X_1|\geq 4n\hat{\mu})}.
\end{align*}
Thus
$$(n-N)V(|X_1|\geq 4n\hat{\mu})\leq -\ln(1-4V(|S_n|\geq n\hat{\mu})).$$
Since $V\lt(\frac{|S_n|}{n}\geq \hat{\mu}\rt)\to 0$, as $n\to\infty$, if $n>2N$, we have
$$\frac{n}{2}V(|X^{n}_1|\geq 4n\hat{\mu})\leq (n-N)V(|X_1|\geq 4n\hat{\mu})\to 0,$$
which implies that
$$\lim_{n\to\infty}nV(|X_1|\geq n)=0.$$
Then by Theorem \ref{th31}, we can deduce that
$$\lim_{n\to\infty}v(\lu_n-\ve\leq\frac{S_n}{n}\leq\ou_n+\ve)=1, \ \ \forall \ve>0,$$
where $\ou_n=\be^{\cp}[(-n\vee X_1)\wedge n]$ and $\lu_n=-\be^{\cp}[(-n\vee -X_1)\wedge n]$.
Thus
$$\lim_{n\to\infty}\lt|\be\lt[\vp\lt(\frac{S_n}{n}\rt)\rt]-\max_{\lu_n\leq\mu\leq\ou_n}\vp(\mu)\rt|=0.$$
Combining with (\ref{e33}), we can see that (ii) and (iii) hold.
\end{proof}

The proof of Proposition \ref{cvp} is just by the simple calculations.
\begin{proof}[Proof of Proposition \ref{cvp}] Taking $y=x$, we immediately obtain
$$n\mathbf{1}_{\{|x|\geq n\}}\leq\psi_n(x).$$
We also have
\begin{align*}
\psi_n(x)=&\max\lt\{\sup_{|y-x|\leq 1}\{n\mathbf{1}_{\{|y|\geq n\}}-n|y-x|\}, \sup_{|y-x|>1}\{n\mathbf{1}_{\{|y|\geq n\}}-n|y-x|\}\rt\}\\
\leq&\max\{n\mathbf{1}_{\{|x|\geq n-1\}}, 0\}=n\mathbf{1}_{\{|x|\geq n-1\}}.
\end{align*}
\end{proof}

In the end, we prove the Peng's LLN on sublinear expectation space with necessary and sufficient conditions.
\begin{proof}[Proof of Theorem \ref{th46}]
By Proposition \ref{p44}, for each $n\in\bn$, there exists a sequence of i.i.d. canonical random variables $\{Y_i\}$ on canonical space $(\br^\bn,\cb(\br^\bn))$ endowed with the set of probability measures $\cp$ constructed by (\ref{e1}) with $\cp_i=\cp_0$ such that
$$\be\lt[\vp\lt(\frac{\sum_{i=1}^nX_i}{n}\rt)\rt]=\be^{\cp}\lt[\vp\lt(\frac{\sum_{i=1}^nY_i}{n}\rt)\rt],\ \ \forall \vp\in C_{b}(\br).$$
Then by Theorem \ref{plln}, (\ref{ee41}) is equivalent to (i)-(iii) in Theorem \ref{plln}.

It is clear that (ii) and (iii) in Theorem \ref{th46} is equivalent to (ii) and (iii) in Theorem \ref{plln} respectively.

For (i) in Theorem \ref{th46}, it is equivalent to
$$\lim_{n\to\infty}\be^{\cp}[\psi_n(X_1)]=0,$$
which implies that
$$nV(|X_1|\geq n)\to 0.$$
Conversely, by (\ref{ee42}),
$$\be^{\cp}[\psi_n(X_1)]\leq nV(|X_1|\geq n-1)=\frac{n}{n-1}\lt((n-1)V(|X_1|\geq n-1)\rt)\to 0.$$
Thus (i) in Theorem \ref{plln} is equivalent to (i) in Theorem \ref{th46}. We complete the proof.
\end{proof}



\begin{thebibliography}{99}
\bibitem{chen1} Chen, Z. (2016). Strong laws of large numbers for sub-linear expectations, Science China Mathematics, 59: 945-954.
\bibitem{chen} Chen, Z., Liu, Q. and Zong, G. (2018). Weak laws of large nunbers for sublinear expecatation. Mathematical Control and Related Fields, 8(3\&4): 637-651.
\bibitem{cwl} Chen, Z., Wu, P. and Li, B. (2013). A strong law of large numbers for non-additive probabilities. International Journal of Approximate Reasoning, 54(3): 365-377.

\bibitem {FPSS} Fang, X., Peng, S., Shao, Q. and  Song, Y.  (2019) Limit theorems with rate of
convergence under sublinear expectations, Bernoulli, 25, 2564-2596.


\bibitem{hl} Hu, M. and Li, X. (2014). Independence Under the $G$-Expectation Framework,
J. Theor. Probab., 27, 1011-1020.

\bibitem{hll} Hu, M., Li, X. and Li, X. (2021). Convergence rate of Peng's law of large numbers under sublinear expectations, Probab. Uncertain. Quant. Risk, 6(3), 261-266.

\bibitem{GLL} Guo, L., Li, S. and Li, X. (2021). On the laws of the iterated logarithm with mean-uncertainty under the sublinear expectations, preprint.

\bibitem{GL}Guo, X. and Li, X. (2021). On the laws of large numbers for pseudo-independent random variables under sublinear expectation, Statist. and Probab. Lett., 172, 109042.



\bibitem{li2021} Li, X. and Peng, S. (2021).  Functional central limit theorem with mean-uncertainty on sublinear expectation space: with the probabilistic proof, preprint.

\bibitem{li} Li, X. and Lin, Y. (2017). Generalized Wasserstein distance and weak convergence of sublinear expectations. Jounral of Thereoitical Probability, 30: 581-593.
\bibitem{massimo} Marinacci, M. (1999). Limit Laws of non-additive probabilites and their frequentist interpretation. Journal of Economic Theory, 84:145-195.
\bibitem{mm} Maccheroni, F. and Marinacci, M. (2005). A strong law of large numbers for capacites. Annals of Probability, 33: 1171-1178.

\bibitem {pengsur}Peng, S. (2009). Survey on normal distributions, central limit
theorem, Brownian motion and the related stochastic calculus under sublinear
expectations, Science in China, Series A. Mathematics, 52, 1391-1411.

\bibitem {Peng2007b}Peng, S. (2019). Law of large numbers and central limit theorem under
nonlinear expectations, Probab. Uncertain. Quant. Risk, 4, 1-8.

\bibitem {Peng2019}Peng, S. (2019). Nonlinear Expectations and Stochastic
Calculus under Uncertainty, Springer.


\bibitem{song} Song, Y. (2021). Stein¡¯s method for the law of large numbers under sublinear expectations,  Probab. Uncertain. Quant. Risk, 6(3), 199-212.
\bibitem{zhang} Zhang, L. (2016). Rosenthal's inequalities for independent and negatively dependent random variables under sub-linear expectations with applications. Science China Mathematics, 59(4): 751-768.
\bibitem{zhang2019} Zhang, L. (2020). The convergence of the sums of independent random variables under the sub-linear exepctations. Acta Mathematica Sinica, English Series, 36(3):224-244.
\bibitem{zhang2021} Zhang, L. (2021). The sufficient and necessary conditions of the strong law of large numbers under the sub-linear expectations, arXiv:2104.08471.
\end{thebibliography}
\end{document}